\newtheorem{theorem}{Theorem}
\newtheorem{corollary}{Corollary}
\newtheorem{lemma}{Lemma}
\newtheorem{definition}{Definition}
\theoremstyle{definition}
\newcommand{\beql}[1]{\begin{equation}\label{#1}}
\newcommand{\eeq}{\end{equation}}
\newcommand{\comment}[1]{}
\newcommand{\Abs}[1]{{\left|{#1}\right|}}
\newcommand{\Floor}[1]{{\left\lfloor{#1}\right\rfloor}}
\newcommand{\Set}[1]{{\left\{{#1}\right\}}}
\newcommand{\RR}{{\mathbb R}}
\newcommand{\ZZ}{{\mathbb Z}}
\newcommand{\inner}[2]{{\langle #1, #2 \rangle}}
\newcommand{\ft}[1]{\widehat{#1}}
\newcounter{rem}
\newcounter{step}
\newcounter{mysec}
\newcounter{mysubsec}[mysec]
\newcounter{othm}
\def\theothm{\Alph{othm}}
\newcounter{fcap}
\newcommand{\mycaption}[2]{
 \refstepcounter{fcap}\label{#2}
 Figure \ref{#2}: {#1}
}
\begin{document}

\title{Size of orthogonal sets of exponentials for the disk}

\author[A. Iosevich]{{Alex Iosevich}}
\address{A.I.: Department of Mathematics, 915 Hylan Building, University of Rochester,
Rochester, NY 14627, U.S.A.}
\email{iosevich@math.rochester.edu}
\thanks{A.I.: Supported by NSF grant DMS10-45404}

\author[M. Kolountzakis]{{Mihail N. Kolountzakis}}
\address{M.K.: Department of Mathematics, University of Crete, Knossos Ave., GR-714 09, Iraklio, Greece}
\email{kolount@math.uoc.gr}
\thanks{M.K.: Supported by research grant No 3223 from the Univ.\ of Crete and by
NSF grant DMS10-45404 and grants of the University of Rochester, whose hospitality is gratefully
acknowledged}

\date{\today}

\begin{abstract}
Suppose $\Lambda \subseteq \RR^2$ has the property that any two exponentials with frequency from $\Lambda$
are orthogonal in the space $L^2(D)$, where $D \subseteq \RR^2$ is the unit disk.
Such sets $\Lambda$ are known to be finite but it is not known if their size is uniformly bounded.
We show that if there are two elements of $\Lambda$ which are distance $t$ apart then the size
of $\Lambda$ is $O(t)$. As a consequence we
improve a result of Iosevich and Jaming and show that $\Lambda$ has at most
$O(R^{2/3})$ elements in any disk of radius $R$.
\end{abstract}

\maketitle

\noindent{\bf Keywords:} Spectral sets; Fuglede's Conjecture.

\ 

\noindent{\bf AMS Primary Classification:} 42B99

\section{Introduction}

\subsection{Orthogonal sets of exponentials for domains in Euclidean space}
Let $\Omega \subseteq \RR^d$ be a bounded measurable set
and let us assume for simplicity that $\Omega$ has Lebesgue measure 1.
The concept of a spectrum of $\Omega$ that we deal with in this paper
was introduced by Fuglede \cite{fuglede1974operators} 
who was studying a problem of Segal on the extendability of the partial differential operators
(on $C_c(\Omega)$)
$$
\frac{\partial}{\partial x_1}, \frac{\partial}{\partial x_2}, \ldots, \frac{\partial}{\partial x_d}
$$
to commuting operators on all of $L^2(\Omega)$.

\begin{definition}
A set $\Lambda \subseteq \RR^d$
is called a {\em spectrum} of $\Omega$ (and $\Omega$ is said to be a {\em spectral set})
if the set of exponentials
$$
E(\Lambda) = \Set{e_\lambda(x)=e^{2\pi i \lambda\cdot x}:\ \lambda\in\Lambda}
$$
is a complete orthogonal set in $L^2(\Omega)$.
\end{definition}
(The inner product in $L^2(\Omega)$ is $\inner{f}{g} = \int_\Omega f \overline{g}$.)

It is easy to see (see, for instance, \cite{kolountzakis2004milano}) that the orthogonality of $E(\Lambda)$
is equivalent to the {\em packing condition}
\beql{packing-condition}
\sum_{\lambda\in\Lambda}\Abs{\ft{\chi_\Omega}}^2(x-\lambda) \le \Abs{\Omega}^2,\ \ \mbox{a.e. ($x$)},
\eeq
as well as to the condition
\beql{zeros-condition}
\Lambda-\Lambda \subseteq \Set{0} \cup \Set{\ft{\chi_\Omega}=0}.
\eeq
Here $\chi_\Omega$ is the indicator function of $\Omega$.

The orthogonality and completeness of $E(\Lambda)$ is in turn equivalent to the {\em tiling condition}
\beql{tiling-condition}
\sum_{\lambda\in\Lambda}\Abs{\ft{\chi_\Omega}}^2(x-\lambda) = \Abs{\Omega}^2,\ \ \mbox{a.e. ($x$)}.
\eeq
These equivalent conditions follow from the identity
$\inner{e_\lambda}{e_\mu} = \int_\Omega e_\lambda \overline{e_\mu} = \ft{\chi_\Omega}(\mu-\lambda)$
and from the completeness of all the exponentials in $L^2(\Omega)$.
Condition \eqref{packing-condition} is roughly expressing the validity of Bessel's inequality for the
system of exponentials $E(\Lambda)$ while condition \eqref{tiling-condition} says that Bessel's inequality
holds as equality.

If $\Lambda$ is a spectrum of $\Omega$ then so is any translate of $\Lambda$ but there may be other spectra as well.

{\em Example:} If $Q_d = (-1/2, 1/2)^d$ is the cube of
unit volume in $\RR^d$ then
$\ZZ^d$ is a spectrum of $Q_d$.
Let us remark here that
there are spectra of $Q_d$ which are very different from affine images of the lattice $\ZZ^d$
\cite{iosevich1998spectral,lagarias2000orthonormal,kolountzakis2000packing}.

Research on spectral sets
\cite{lagarias1997spectral,jorgensen1999spectral-pairs,laba2002spectral,laba2001twointervals,
bose2010spectrum,bose2010three,kolountzakis2011periodic,iosevich2011periodicity}
has been driven for many years by a conjecture of Fuglede
\cite{fuglede1974operators},
sometimes called the Spectral Set Conjecture,
which stated that a set $\Omega$ is spectral if and only
if it is a translational tile. A set $\Omega$ is a translational tile if
we can translate copies of $\Omega$ around and fill space without overlaps.
More precisely there exists a set $S \subseteq \RR^d$ such that
\beql{tiling}
\sum_{s\in S} \chi_\Omega(x-s) = 1,\ \ \mbox{a.e. ($x$)}.
\eeq

One can generalize naturally the notion of translational tiling from sets to functions
by saying that a nonnegative $f \in L^1(\RR^d)$ tiles when translated at the locations $S$
if $\sum_{s\in S} f(x-s) = \ell$ for almost every $x\in\RR^d$
(the constant $\ell$ is called the {\em level} of the tiling).
Thus the question of spectrality for a set $\Omega$ is essentially a tiling question
for the function $\Abs{\ft{\chi_\Omega}}^2$ (the {\em power-spectrum}).
Taking into account the equivalent condition \eqref{tiling-condition}
one can now, more elegantly, restate the Fuglede Conjecture as the equivalence
\beql{fuglede-conjecture}
\chi_\Omega \mbox{ tiles $\RR^d$ by translation at level 1} \Longleftrightarrow
\Abs{\ft{\chi_\Omega}}^2 \mbox{ tiles $\RR^d$ by translation at level $\Abs{\Omega}^2$}.
\eeq
In this form the conjectured equivalence is perhaps more justified.
However this conjecture is now known to be false in both directions if $d\ge 3$
\cite{tao2004fuglede,matolcsi2005fuglede4dim,kolountzakis2006hadamard,kolountzakis2006tiles,farkas2006onfuglede,farkas2006tiles},
but remains open in dimensions $1$ and $2$ and it is not out of the question
that the conjecture is true in all dimensions if one restricts the domain $\Omega$ to be convex. (It is known
that the direction ``tiling $\Rightarrow$ spectrality'' is true in the case of convex domains;
see for instance \cite{kolountzakis2004milano}.)
The equivalence \eqref{fuglede-conjecture} is also known, from the time of Fuglede's
paper \cite{fuglede1974operators}, to be true if one adds the word {\em lattice} to both sides
(that is, lattice tiles are the same as sets with a lattice spectrum).

\subsection{Orthogonal exponentials for the disk}
Already in \cite{fuglede1974operators} it was claimed that the disk in the plane (and the Euclidean ball in $\RR^d$)
is not a spectral set, in agreement with \eqref{fuglede-conjecture}.
A proof appeared in \cite{iosevich-katz-pedersen}.
Later it was proved in \cite{fuglede-ball,iosevich-rudnev-smooth-bodies}
that any orthogonal set of exponentials for the ball must necessarily be finite.
It is still unknown however if there is a uniform bound for the size of each
orthogonal set. It is still a possibility that there are arbitrarily large orthogonal sets
of exponentials for the ball and proving a uniform upper bound is probably very hard
as it appears to depend on algebraic relations among the roots of the Bessel function $J_1$.
In the direction of showing upper bounds for orthogonal sets of exponentials it
was proved in \cite{iosevich-jaming} that if $\Lambda$ is a set of orthogonal exponentials
for the ball then $\Abs{\Lambda \cap [-R,R]^d} = O(R)$, with the implicit
constant independent of $\Lambda$.
Completeness would of course require that $\Abs{\Lambda \cap [-R,R]^d} \gtrsim R^d$ (this follows easily
from the tiling condition \eqref{tiling-condition}).

The result in this paper, Theorem \ref{th:main} below, improves the result of  \cite{iosevich-jaming}
mentioned above. We choose to work only in the case of the unit disk in the plane and not in higher dimension
or in the larger class of smooth convex bodies in order to present a clear geometric argument, which
probably extends to these cases as well.

\begin{theorem}\label{th:main}
There are constants $C_1, C_2$ such that
whenever $\Lambda \subseteq \RR^2$ is an orthogonal set of exponentials for the unit disk in the plane and
$$
t = \inf\Set{\Abs{\lambda-\mu}: \lambda, \mu \in \Lambda, \lambda\neq\mu}
$$
then $\Abs{\Lambda} \le C_1 t$.
(It is well known and easy to see from \eqref{packing-condition} or \eqref{zeros-condition} that $t>0$.)

Furthermore, $\Abs{\Lambda \cap [-R,R]^2} \le C_2 R^{2/3}$ for all $R\ge 1$.
\end{theorem}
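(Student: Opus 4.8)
The plan is to pass from orthogonality to the geometry of the distance set, and then to exploit a rigidity coming from the fact that the zeros of $J_1$ do not form an exact arithmetic progression. Since $\chi_D$ is radial, $\ft{\chi_D}(\xi)$ is a constant multiple of $J_1(2\pi\Abs{\xi})/\Abs{\xi}$, so by \eqref{zeros-condition} every distance $\Abs{\lambda-\mu}$ with $\lambda\neq\mu$ in $\Lambda$ lies in $\Set{r_k : k\ge 1}$, where $r_k=\rho_k/(2\pi)$ and the $\rho_k$ are the positive zeros of $J_1$. I would begin by recording the McMahon asymptotics $r_k=(k+\tfrac14)/2+O(1/k)$: the admissible distances form a perturbed arithmetic progression of step $1/2$ carrying a genuine phase offset $1/8$, which is neither an integer nor a half-integer multiple of the step. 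Everything below is driven by this offset.

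The first key step is a \emph{collinearity defect} estimate. For any three points $P,Q,R\in\Lambda$ with mutual distances $r_a,r_b,r_c$, where $r_c$ is the largest, consider the triangle defect $r_a+r_b-r_c$. Using the asymptotics it equals $(a+b-c)/2+\tfrac18+O(1/\min(a,b,c))$; since $a+b-c\in\ZZ$ and the defect is strictly positive, it cannot be $o(1)$, and in fact $r_a+r_b-r_c\ge c_0$ for some absolute $c_0>0$ once the three distances are large. The finitely many triples of bounded diameter are harmless, since a bounded region contains only $O(1)$ points of a $t$-separated set. Geometrically this says that no three points of $\Lambda$ can be $o(1)$-close to collinear at large scale.

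Next I fix a pair $\lambda,\mu\in\Lambda$ realizing the minimum $t=\Abs{\lambda-\mu}$ and place them symmetrically on the first axis. For $\nu\in\Lambda$ write $\Abs{\nu-\lambda}=r_a$ and $\Abs{\nu-\mu}=r_b$; the triangle inequality gives $r_a-r_b\in[-t,t]$, and since $r_a-r_b=(a-b)/2+O(1/\min(a,b))$ the integer $s=a-b$ is confined to an interval of length $O(t)$. Thus the points of $\Lambda$ split into $O(t)$ classes according to $s$, and the points of one class lie within $o(1)$ of a single branch of the hyperbola $\Set{\nu : \Abs{\nu-\lambda}-\Abs{\nu-\mu}=\mathrm{const}}$ with foci $\lambda,\mu$. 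As one moves outward along such a branch it flattens onto its rectilinear asymptote, so three points of one class that are far from $\Set{\lambda,\mu}$ are $o(1)$-close to collinear, contradicting the defect estimate. Hence each class contributes $O(1)$ points outside a fixed neighbourhood of the foci, and that neighbourhood itself contains $O(1)$ points of the $t$-separated set $\Lambda$. Summing over the $O(t)$ classes yields $\Abs{\Lambda}\le C_1 t$.

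Finally, the bound $\Abs{\Lambda\cap[-R,R]^2}\le C_2R^{2/3}$ follows by combining the first assertion with a packing bound. Writing $N=\Abs{\Lambda\cap[-R,R]^2}$, the open disks of radius $t/2$ about these points are disjoint and lie in a square of side $2R+t$, so $N\le C R^2/t^2$ when $t\le R$ (and $N=O(1)$ otherwise); together with $N\le\Abs{\Lambda}\le C_1 t$ this gives $N\le CC_1^2R^2/N^2$, i.e.\ $N=O(R^{2/3})$. The main obstacle is the uniformity needed in the third step: the rate at which a branch flattens onto its asymptote degenerates as $\Abs{s}$ approaches its extreme values (nearly degenerate hyperbolae hugging the axis), so one must make the defect estimate quantitative — replacing the $o(1)$ by an explicit decreasing function of the scale — and match it against the distance from branch to asymptote, which is of order $\beta\alpha^2/(tx)$ for semi-axes $\alpha,\beta$ at abscissa $x$, uniformly over all $O(t)$ classes and independently of $\diam\Lambda$. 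Controlling the crossover between the curved near-foci region and the flat far region at a single scale valid for every class is where the real work lies.
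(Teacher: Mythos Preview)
Your outline is essentially the paper's own argument. Your ``collinearity defect'' $r_a+r_b-r_c\ge c_0$ is equivalent to the paper's angle/strip lemma (three $\Lambda$-points with mutual distances $\ge L$ cannot lie in a strip of width $c\sqrt{L}$), and your decomposition into $O(t)$ hyperbola classes with foci at the closest pair is exactly the confocal-hyperbola scheme the paper uses.

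The difficulty you flag at the end is real, and the paper resolves it as follows. First, the defect estimate is made quantitative as a strip bound of width $c\sqrt{L}$; this in particular yields the intermediate estimate $\Abs{\Lambda\cap[-R,R]^2}=O(R/\Delta^{1/2})$ once the local gap is $\Delta$. Second, the nearly degenerate hyperbolae (your extreme $\Abs{s}$, the paper's small $b(\lambda)$) are disposed of by a separate short lemma: such $\lambda$ lie in a sector of aperture $O(\Delta^{-1/2})$ about the axis through the foci, which is contained in a strip of width $O(\Delta^{1/2})$ together with a forbidden angular region, so only $O(1)$ points have $b(\lambda)\lesssim\Delta^{1/2}$. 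With $b(\lambda)\gtrsim\Delta^{1/2}$ in hand, an explicit computation shows the distance from $\lambda$ to the asymptote of its $H_k$ is $O(\Delta^2/\Abs{\lambda})$; the crossover scale you were looking for is therefore $\Abs{\lambda}\sim\Delta^{3/2}$, uniform over all classes. Beyond that scale every point sits in one of $O(\Delta)$ strips of width $O(\Delta^{1/2})$, hence $O(\Delta)$ points there; inside that scale the strip bound (or indeed the trivial packing bound, since a $\Delta$-separated set in a disc of radius $C\Delta^{3/2}$ has $O(\Delta)$ points) gives another $O(\Delta)$.

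Your derivation of the $R^{2/3}$ bound from the first part via the \emph{trivial} packing inequality $N\le C R^2/t^2$ is correct and is a pleasant simplification over the paper, which instead reuses the sharper strip-based bound $N\le C R/t^{1/2}$ to reach the same conclusion.
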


The proof of Theorem \ref{th:main} is given in the next section.

\section{Proof of the main theorem}

A crucial ingredient of the proof is the asymptotics for the zeros of the Fourier Transform
of the indicator function of the unit disk $D = \Set{x \in \RR^2: \Abs{x}\le 1}$,
which is of course a radial function.
Since the zeros of $\ft{\chi_D}(r)$ are the same as the zeros of the Bessel function $J_1(2\pi r)$
and since for the zeros of $J_1$, written as $j_{1,n}$, $n=1,2,\ldots$ we have an asymptotic expansion
\cite{abramowitz-stegun}
\beql{zero-asymptotics}
j_{1,n} = \rho_n + \frac{K_1}{\rho_n} + O\left(\frac{1}{\rho_n^3}\right),
	\ \ \mbox{where $\rho_n=n\pi + \frac{\pi}{4}$, $n=1,2,\ldots$},
\eeq
where $K_1$ is an absolute constant,
it follows that the zeros of $\ft{\chi_D}(r)$ are at the locations
\beql{ft-zeros}
r_n = \frac{1}{2\pi} j_{1,n} = \frac{n}{2}+\frac{1}{8} + \frac{K_1}{2\pi \rho_n} + O(n^{-3}).
\eeq
Moreover, if $0\le m-n \le K$ and $m,n \to \infty$ it follows from \eqref{zero-asymptotics} that
\beql{quadratic-error}
r_m - r_n = \frac{m-n}{2} + O\left(K n^{-2}\right) = \frac{m-n}{2} + O\left(Kr_n^{-2}\right) =
	\frac{m-n}{2} + O\left((r_m-r_n)r_n^{-2}\right).
\eeq

\begin{lemma}\label{lm:angle}
There is a constant $C>0$ such that whenever
$a, b, c \in \RR^2$ are orthogonal for the unit disk, with 
$\Abs{a-c}, \Abs{b-c}, \Abs{a-b} \ge R$ then the two largest
angles of the triangle $abc$ (as well as all its external angles) are
\beql{angle-bound}
\ge \frac{C}{R^{1/2}}.
\eeq
\end{lemma}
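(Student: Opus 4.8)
The plan is to reduce all three assertions in \eqref{angle-bound} to the single estimate $\pi-\theta_3\gtrsim R^{-1/2}$, where $\theta_3$ denotes the largest internal angle of the triangle $abc$. Ordering the internal angles as $\theta_1\le\theta_2\le\theta_3$, the smallest external angle is $\pi-\theta_3$ (it sits at the vertex carrying the largest internal angle), so the external-angle bound is exactly $\pi-\theta_3\gtrsim R^{-1/2}$. For the two largest internal angles, $\theta_3\ge\pi/3$ holds in every triangle, while $\theta_2\ge\tfrac12(\theta_1+\theta_2)=\tfrac12(\pi-\theta_3)$; hence a lower bound on $\pi-\theta_3$ also controls $\theta_2$. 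Thus everything follows once I bound $\pi-\theta_3$ from below.

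By the orthogonality hypothesis and \eqref{zeros-condition}, together with the radiality of $\ft{\chi_D}$, each of the three side lengths is one of the zeros $r_n$ in \eqref{ft-zeros}. Write the sides as $s_i=r_{n_i}$ with $s_3$ the longest (so $\theta_3$ is opposite $s_3$ and $n_3\ge n_1,n_2$). The crux is to show that the \emph{additive defect}
$$\delta=s_1+s_2-s_3$$
is bounded below by an absolute constant. Substituting \eqref{ft-zeros}, the half-integer main terms combine to $\tfrac12(n_1+n_2-n_3)\in\tfrac12\ZZ$, the three copies of $\tfrac18$ add to $+\tfrac18$, and the remaining tails are $O(n_i^{-1})=O(R^{-1})$ because every side is $\ge R$; hence
$$\delta=\frac{n_1+n_2-n_3}{2}+\frac18+O(R^{-1}).$$
Since the strict triangle inequality forces $\delta>0$, the integer $n_1+n_2-n_3$ cannot be negative (the value $-1$ would give $\delta\approx-\tfrac38$), so the main term is at least $\tfrac18$ and $\delta\ge\tfrac1{16}$ once $R$ exceeds an absolute constant. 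It is exactly the $\tfrac18$ offset in the Bessel zeros that keeps the two shorter sides from summing to the longest one, and so keeps the triangle a definite distance from degeneracy.

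To turn the defect into an angle bound I apply the law of cosines at the vertex opposite $s_3$. Writing $s_3=s_1+s_2-\delta$ and simplifying gives
$$1+\cos\theta_3=\frac{\delta\,\big(2(s_1+s_2)-\delta\big)}{2s_1s_2}\asymp\delta\,\big(s_1^{-1}+s_2^{-1}\big),$$
the two-sided comparison holding because $s_1+s_2\le 2(s_1+s_2)-\delta\le 2(s_1+s_2)$. On the other hand $1+\cos\theta_3=2\sin^2\!\big(\tfrac{\pi-\theta_3}{2}\big)\asymp(\pi-\theta_3)^2$; here one may assume $\theta_3$ is close to $\pi$, since otherwise $\pi-\theta_3$ is already bounded below by an absolute constant and there is nothing to prove. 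Combining the two displays with $\delta\ge\tfrac1{16}$ yields $\pi-\theta_3\asymp\sqrt{\delta\,(s_1^{-1}+s_2^{-1})}$, and since the two shorter sides are of size $\asymp R$ this is $\gtrsim R^{-1/2}$. By the reduction of the first paragraph this proves \eqref{angle-bound}.

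The main obstacle is the defect estimate $\delta\gtrsim1$: the whole bound rests on it, and it genuinely needs the fine asymptotics \eqref{ft-zeros} — that each zero is a half-integer plus $\tfrac18$ plus a decaying tail — rather than the mere $\tfrac12$-spacing of consecutive zeros. One must verify that the $O(R^{-1})$ correction cannot close the $\tfrac18$ gap, which is why $R$ is taken large; the uniform form \eqref{quadratic-error} is precisely the tool for controlling these tails if one prefers to track the dependence on $n_3-n_1$. Note that the hypothesis on the side lengths enters twice: the lower bound $s_i\ge R$ makes the tails negligible in the defect, while the comparability $s_i\asymp R$ supplies the factor $R^{-1/2}$. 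Finally, the square-root loss is intrinsic and not an artifact: it comes from the quadratic vanishing of $1+\cos\theta_3$ as $\theta_3\to\pi$, which is what turns a constant-size defect into an angle of size $R^{-1/2}$ rather than $R^{-1}$.
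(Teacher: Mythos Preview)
Your proof is correct and follows essentially the same route as the paper. Both arguments compute the same additive defect $\delta=s_1+s_2-s_3$ (the paper writes it as $\Abs{b-c}-(T-R)$), use the $\tfrac18$ offset in the Bessel-zero asymptotics \eqref{ft-zeros} to force $\delta\gtrsim 1$, and then convert this into an angle bound; you do the last step via the law of cosines and the reduction to $\pi-\theta_3$, while the paper does an equivalent coordinate computation bounding $\theta_2$ directly. One small wording issue: you write ``the two shorter sides are of size $\asymp R$'' whereas the hypothesis only gives $s_i\ge R$, but this matches the paper's own convention of taking $R=\Abs{a-c}$ to \emph{equal} the shortest side, and your inequality $(\pi-\theta_3)^2\gtrsim\delta(s_1^{-1}+s_2^{-1})\ge\delta\,s_1^{-1}$ already yields the desired bound in terms of the shortest side without needing two-sided comparability.
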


\begin{proof}
Assume without loss of generality that $R=\Abs{a-c}\le\Abs{b-c}\le\Abs{a-b}$
(see Fig.\ \ref{fig:triangle}).
Writing $\theta = \ft{bac}$ for the second largest angle and $T=\Abs{a-b}$ we have

\begin{figure}[h]
\begin{center}
\begin{asy}
import graph;
import markers;
size(8cm);
real off=0.3;
pair	a=(-1,0), b=(1,0), c=(-off,0.2);
pair	d=(-off,0);
dot(a); dot(b); dot(c);
draw(a -- b -- c -- a);
draw(c -- d, dashed);
draw("$T$", (a-(0,0.1)) -- (b-(0,0.1)), dashed, Arrows);
markangle("$\theta$", b, a, c);
label("$a$", a, W);
label("$b$", b, E);
label("$c$", c, N);
//label("$d$", d, NE);
label("$R$", 0.5*(a+c), NW);
\end{asy}

\mycaption{Three points orthogonal for the unit disk}{fig:triangle}
\end{center}
\end{figure}

$$
\Abs{b-c} = \sqrt{(T-R\cos\theta)^2 + R^2 \sin^2\theta} = \sqrt{(T-R)^2 + 2T R(1-\cos\theta)}
$$
from which we get
\beql{tmp-1}
\Abs{b-c}-(T-R) = \frac{2T R(1-\cos\theta)}{T-R+\Abs{b-c}} =
 \frac{2R(1-\cos\theta)}{1-\frac{R}{T} + \frac{\Abs{b-c}}{T}} \le 2R(1-\cos\theta) \le R\theta^2.
\eeq
From \eqref{ft-zeros} it follows that
as $R\to\infty$ the quantities $\Abs{a-b}, \Abs{b-c}, \Abs{a-c}$ are all of the form
$$
\frac{k}{2} + \frac{1}{8} + o(1),\ \ \ \mbox{for some integer $k$}.
$$
It follows that $\Abs{b-c}-(T-R) = \frac{k}{2} + \frac{1}{8} + o(1)$, for some integer $k\ge 0$.
This, together with \eqref{tmp-1}, implies that $\frac{k}{2} + \frac{1}{8} + o(1) \le R \theta^2$ which
gives us the required inequality with constant $C$ arbitrarily close to $\sqrt{1/8}$ when $R$ is large.
\end{proof}

\begin{corollary}\label{cor:strip}
There is a constant $C'>0$ such that whenever
$a, b, c \in \RR^2$ are orthogonal for the unit disk and their pairwise distances are at least $L$
then they cannot all belong to a strip of width $C' L^{1/2}$.
\end{corollary}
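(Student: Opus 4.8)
The plan is to argue by contradiction, using Lemma \ref{lm:angle} as the engine: if three orthogonal points with pairwise distances at least $L$ were confined to a very thin strip, the triangle they span would be so flat that its \emph{second largest} angle would drop below the universal lower bound supplied by the Lemma.

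First I would isolate a purely geometric fact: a triangle contained in a strip of width $w$ has smallest altitude at most $w$. To see this, orient the strip horizontally, so that the three points have $y$-coordinates lying in an interval of length at most $w$; then the triangle sits in an axis-parallel box of dimensions $\Delta x \times \Delta y$ with $\Delta y \le w$, and hence has area at most $\frac12\,\Delta x\,\Delta y \le \frac12\,\Delta x\, w$. Writing $s_3$ for the longest side and $h$ for the altitude dropped onto it, we have $h = 2\,\mathrm{Area}/s_3$; since the two points realizing the horizontal extent are at mutual distance at least $\Delta x$, we have $s_3 \ge \Delta x$, and therefore $h \le \Delta x\, w/s_3 \le w$. (Equivalently, the minimum width of a triangle over all directions is its smallest altitude, which here is at most $w$.)

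Next I would convert this into an angle bound. Label the sides $s_1 \le s_2 \le s_3$ and the opposite angles $\alpha \le \beta \le \gamma$. The smallest altitude is the one onto the longest side $s_3$, and elementary trigonometry gives $h = s_1 \sin\beta$. Since every side is at least $L$, in particular $s_1 \ge L$, the estimate $h \le w$ yields $\sin\beta \le w/L$. Because $\beta$ is the middle angle it satisfies $\beta \le \pi/2$, so $\beta \le \arcsin(w/L) \le \frac{\pi}{2}\,\frac{w}{L}$. Thus the \emph{second largest} angle of the triangle is at most $\pi w/(2L)$.

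Finally I would apply Lemma \ref{lm:angle} with $R = L$: as all pairwise distances are at least $L$, the two largest angles---and in particular the second largest angle $\beta$---are at least $C/L^{1/2}$, with $C$ the constant of the Lemma. Combining the two inequalities for $\beta$ gives $C/L^{1/2} \le \pi w/(2L)$, i.e. $w \ge (2C/\pi)\,L^{1/2}$. Hence no strip of width strictly less than $(2C/\pi)L^{1/2}$ can contain the three points, and the Corollary follows with any $C' < 2C/\pi$. The one genuinely delicate point is the second paragraph: I must be sure that thinness of the strip forces a small \emph{altitude}, and thus a small \emph{second largest} angle, rather than merely producing the expected large obtuse angle; the bounding-box estimate settles this, the crucial feature being that the controlled altitude is the one onto the \emph{longest} side, against which the short side $s_1 \ge L$ can be played off.
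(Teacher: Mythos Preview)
Your proof is correct and follows essentially the same approach as the paper: both arguments bound the altitude from the third vertex onto the longest side in two ways---above by the strip width, and below (via Lemma~\ref{lm:angle}) by a constant times $L^{1/2}$. The paper achieves the upper bound by rotating the strip so that the longest side lies on one edge, whereas you obtain it through a bounding-box area estimate; your additional observations that the smallest altitude equals $s_1\sin\beta$ and that the middle angle satisfies $\beta\le\pi/2$ make the trigonometry more explicit but do not change the underlying idea.
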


\begin{proof}
Suppose they do belong to such a strip.

\begin{figure}[h]
\begin{center}
\begin{asy}
import graph;
import markers;
size(8cm);
pair	A=(0,0), u=(10,5);
pair	a=A+0.1*u, b=A+0.9*u, c=A+(0,1)+0.3*u;
draw(A -- A+u);
draw(A+(0,1.5) -- A+(0,1.5)+u);
draw(a -- b -- c -- a);
dot(a); dot(b); dot(c);
label("$a$", a, SE);
label("$b$", b, SE);
label("$c$", c, NW);
markangle("$\theta$", b, a, c);
\end{asy}

\mycaption{Three points in a strip}{fig:strip}
\end{center}
\end{figure}

Move and turn the strip so that two of the points, those with the largest distance apart,  say $a$ and $b$
are on one of the strip sides and the other point $c$ is still in the strip (see Fig.\ \ref{fig:strip}).
Assume also that $c$ is closer to $a$ than to $b$.
By Lemma \ref{lm:angle} it follows that
the angle $\theta = \ft{bac}$ is at least
$$
\frac{C}{\Abs{a-c}^{1/2}}
$$
from which we obtain that the distance of $c$ to the line $ab$
is at least $C \sqrt{a-c} \ge C \sqrt{L}$,
a contradiction if the constant $C'$ in the Lemma is sufficiently small.
\end{proof}

\begin{corollary}\label{cor:smallest-gap}
Suppose $\Lambda \subseteq \RR^2$ is a set of orthogonal exponentials for the unit disk, $R>0$ and let
$$
\Delta = \inf\Set{\Abs{\lambda-\mu}:\ \lambda, \mu \in \Lambda\cap[-R,R]^2}.
$$
Then
\beql{bound-wrt-gap}
\Abs{\Lambda \cap [-R, R]^2} \le C \frac{R}{\Delta^{1/2}},
\eeq
for some constant $C>0$.
\end{corollary}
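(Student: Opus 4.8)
The plan is to reduce the counting to Corollary \ref{cor:strip} by slicing the box $[-R,R]^2$ into thin parallel strips. Let $C'$ be the constant furnished by that corollary and put the strip width at $w = C'\Delta^{1/2}$. Since every subset of $\Lambda$ is again orthogonal for the disk, and since by the very definition of $\Delta$ any two distinct points of $\Lambda\cap[-R,R]^2$ are at distance $\ge\Delta$ from one another, any three points of $\Lambda\cap[-R,R]^2$ have all pairwise distances $\ge\Delta$. Applying Corollary \ref{cor:strip} with $L=\Delta$ then forbids three such points from lying in a common strip of width $C'\Delta^{1/2}=w$.

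The consequence I would record is that each strip of width $w$ meets $\Lambda\cap[-R,R]^2$ in at most two points. I would then partition the square into horizontal strips of width $w$ --- the orientation is immaterial --- of which $\Ceil{2R/w}$ suffice to cover a square of side $2R$. Multiplying the per-strip bound by the number of strips gives
$$
\Abs{\Lambda\cap[-R,R]^2}\ \le\ 2\Ceil{\frac{2R}{w}}\ =\ 2\Ceil{\frac{2R}{C'\Delta^{1/2}}},
$$
which is $O(R/\Delta^{1/2})$ and is exactly the asserted bound \eqref{bound-wrt-gap}.

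There is no real obstacle here: all of the geometric force lives in Corollary \ref{cor:strip}, and what remains is bookkeeping. The only care needed is to absorb the ceiling into the constant (in the nontrivial range one has $2R\ge w$, so $\Ceil{2R/w}\le 2\cdot 2R/w$) and to dispose separately of the degenerate cases in which $\Lambda\cap[-R,R]^2$ has fewer than three points, where the stated inequality holds trivially for a suitable $C$ once one notes that two points present in the box force $\Delta\le\diam([-R,R]^2)=2\sqrt2\,R$. I expect the write-up to be short, with the strip-slicing argument of the previous paragraph as its entire substance.
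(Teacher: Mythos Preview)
Your proposal is correct and is essentially identical to the paper's own proof: cover $[-R,R]^2$ by $O(R/\Delta^{1/2})$ parallel strips of width $c\Delta^{1/2}$ and invoke Corollary~\ref{cor:strip} to get at most two points of $\Lambda$ per strip. The paper states this in two lines and omits the bookkeeping (ceiling, degenerate cases) that you spell out.
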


\begin{proof}
Cover $[-R,R]^2$ by $O(R/\Delta^{1/2})$ strips of width $c\Delta^{1/2}$, for small $c>0$.
From Corollary \ref{cor:strip} each of these contains at most two points of $\Lambda$.
\end{proof}

We may assume from now on
that the points $V=(\Delta, 0)$ and $-V=(-\Delta, 0)$ belong to the set $\Lambda$ and that
$t/2 \le \Delta \le t$.
It is also sufficient to bound the size of $\Lambda$ in the first quadrant only, for reasons
of symmetry, so we restrict ourselves to the first quadrant.
By Corollary \ref{cor:strip} we have that
\beql{two-strips}
\Abs{\Lambda \cap \Set{(x,y):\ x,y \ge 0,\ \min\Set{x,y} \le \Delta}} = O(\Delta^{1/2}).
\eeq
So from now on we may assume that the point $\lambda=(x,y) \in \Lambda$ belongs to the first quadrant and
has $x, y \ge \Delta$.

To each $\lambda = (x,y)$ in the open first quadrant we correspond two numbers $a(\lambda), b(\lambda) \in (0,\Delta)$
such that $a(\lambda)^2 + b(\lambda)^2 = \Delta^2$ and $\lambda$ is on the hyperbola
$$
H_\lambda:\ \frac{x^2}{a(\lambda)^2} - \frac{y^2}{b(\lambda)^2} = 1,\ \ \ (x,y\ge 0).
$$
This hyperbola $H_\lambda$ is the locus of all points $p$ in the first quadrant such that
\beql{locus-1}
\Abs{p+V} - \Abs{p-V} = 2a(\lambda).
\eeq
A parametrization of $H_\lambda$ is
\beql{param}
x(t) = a(\lambda) \cosh t,\ y(t) = b(\lambda) \sinh t,\ \ (t\ge 0).
\eeq
It follows that in the region of interest $x, y \ge \Delta$ we have
\beql{sizes}
\Delta \le x \le a(\lambda) e^t \le 2\Abs{\lambda},\ \ \ \Delta \le y \le b(\lambda) e^t \le 2\Abs{\lambda}.
\eeq

\begin{lemma}\label{lm:b-range}
There is a constant $K>0$ such that $b(\lambda) \ge K \Delta^{1/2}$ with the exception of at most a constant
number of points of $\Lambda$.
\end{lemma}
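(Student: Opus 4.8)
The plan is to read off the focal parameters $a(\lambda),b(\lambda)$ of the hyperbola $H_\lambda$ directly from the orthogonality relations among $\lambda$, $V$ and $-V$, and then to show that $a(\lambda)$ can never get too close to $\Delta$. First I would apply the orthogonality condition \eqref{zeros-condition} to the pairwise orthogonal triple $\lambda, V, -V \in \Lambda$: it forces the three mutual distances to be zeros of $\ft{\chi_D}$, so I may write $\Abs{\lambda - V} = r_n$, $\Abs{\lambda + V} = r_m$ and $\Abs{2V} = 2\Delta = r_N$ for integers $n,m,N$, with $m > n$ since a point of the open first quadrant is strictly closer to $V$ than to $-V$. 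By the defining property \eqref{locus-1} of $H_\lambda$ this gives
\[
2a(\lambda) = \Abs{\lambda + V} - \Abs{\lambda - V} = r_m - r_n, \qquad 2\Delta = r_N .
\]

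The crux is an arithmetic cancellation in \eqref{ft-zeros}. Writing $e_k = r_k - \left(\tfrac{k}{2} + \tfrac18\right) = \tfrac{K_1}{2\pi\rho_k} + O(k^{-3})$ for the correction term, the constant $\tfrac18$ cancels in $r_m - r_n$ but survives in $r_N$, so that
\[
2\Delta - 2a(\lambda) = r_N - (r_m - r_n) = \frac{N-(m-n)}{2} + \frac18 + \bigl(e_N - e_m + e_n\bigr).
\]
Since $\lambda$ lies off the positive $x$-axis we have $2a(\lambda) < 2\Delta$, and as $N-(m-n)$ is an integer while the correction $\Abs{e_N - e_m + e_n}$ is small, this forces $N-(m-n)\ge 0$, so that $2\Delta - 2a(\lambda) \ge \tfrac18 - \Abs{e_N-e_m+e_n}$ is bounded below by a positive absolute constant (small $b(\lambda)$ occurs precisely in the extremal case $m-n=N$). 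Converting to $b$, and using $\Delta + a(\lambda) \ge \Delta$,
\[
b(\lambda)^2 = \Delta^2 - a(\lambda)^2 = \bigl(\Delta - a(\lambda)\bigr)\bigl(\Delta + a(\lambda)\bigr) \ge \Delta\,\bigl(\Delta - a(\lambda)\bigr) = \frac{\Delta}{2}\bigl(2\Delta - 2a(\lambda)\bigr) \ge c_0 \Delta ,
\]
which yields $b(\lambda) \ge K\Delta^{1/2}$ with $K=\sqrt{c_0}$.

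The step requiring real care — the main obstacle — is the uniform bound $\Abs{e_N - e_m + e_n} < \tfrac18$. In the extremal case $m-n=N$ the gap is of size $\sim 4\Delta$, hence large, so \eqref{quadratic-error} (valid only for bounded $m-n$) does \emph{not} apply; instead I would estimate each $e_k$ individually from \eqref{ft-zeros}, where $\Abs{e_k} = O(1/\rho_k)$. For indices above an absolute threshold this is comfortably below $\tfrac18$, and the finitely many smallest zeros are handled by a direct estimate, so the inequality in fact holds for every admissible $\lambda$. Allowing a constant number of exceptions is then only a convenience: should one prefer not to estimate the first few zeros, the corresponding $\lambda$ — those with $\Abs{\lambda - V}$ below a fixed bound — lie in a bounded neighbourhood of $V$ and number only $O(1)$, since $\Lambda$ is $\Delta$-separated.
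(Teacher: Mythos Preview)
Your argument is correct, and it is genuinely different from the paper's. The paper proceeds geometrically: Lemma~\ref{lm:angle} forbids $\lambda$ from lying in a narrow angular sector of opening $\sim\Delta^{-1/2}$ at $V$, and Corollary~\ref{cor:strip} limits to $O(1)$ the points in the adjacent strip of width $O(\Delta^{1/2})$; since $b(\lambda)\le K\Delta^{1/2}$ forces $\lambda$ below an asymptote of slope $\le\tfrac{K}{2}\Delta^{-1/2}$, such $\lambda$ fall in that covered region. Your route is arithmetic: you read off $2a(\lambda)=r_m-r_n$ and $2\Delta=r_N$ from \eqref{zeros-condition}, observe that the constant $\tfrac18$ in \eqref{ft-zeros} cancels in the difference $r_m-r_n$ but survives in $r_N$, and conclude $\Delta-a(\lambda)\ge c_0>0$ is bounded below by an \emph{absolute} constant, whence $b(\lambda)^2\ge c_0\Delta$. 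Your approach is more direct and in fact yields the stronger conclusion that no exceptions are needed once the first few $e_k$ are checked numerically (indeed $\Abs{e_k}<0.02$ for all $k\ge1$, so $\Abs{e_N-e_m+e_n}<0.06<\tfrac18$). The paper's approach, by contrast, avoids any explicit numerics on Bessel zeros and reuses the geometric machinery (Lemma~\ref{lm:angle}, Corollary~\ref{cor:strip}) already in place for the rest of the proof of Theorem~\ref{th:main}.
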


\begin{proof}
\ 

\begin{figure}[h]
\begin{center}
\begin{asy}
import graph;
import contour;
import markers;
size(9cm);

real    a=8.0, c=10.0;
real    b, factor=3;
b = sqrt(c^2-a^2);

real[]  v={1.0};

real f(real x, real y) {return x^2/a^2 - y^2/b^2;}

real wdt=factor*c;
real hgt=(b/a)*factor*c;
real t=2.0;
real x=a*cosh(t);
real y=b*sinh(t);
fill((0,0)--(wdt+c,0)--(wdt,hgt)--cycle, mediumgray);
dot("$\lambda$", (x,y));
draw(contour(f, (0, 0), (factor*c, factor*c), v));
draw("$L$", (0,0) -- (wdt, hgt), NW);
draw("$M$", (c,0) -- (wdt+c, hgt));
draw("$2\Delta$",(-c,-0.4*c) -- (c,-0.4*c), dashed, Arrows, PenMargins);
markangle("$c\Delta^{-1/2}$", (2*c,0), (c,0), (wdt+c, hgt));

xaxis("$x$");
dot("$-V$", (-c,0), SW);
dot("$V$", (c,0), SE);
label("$a(\lambda)$", (a,0), S);
label("0", (0,0), NW);
yaxis("$y$");
\end{asy}

\mycaption{Figure to aid the proof of Lemma \ref{lm:b-range}}{fig:hyperbola}
\end{center}
\end{figure}

It follows from Lemma \ref{lm:angle} that $\lambda$ cannot belong to the sector defined by the 
$x$-axis from $V$ onward and the straight line $M$ through $V$ of angle $c\Delta^{-1/2}$, if $c>0$ is
small enough (refer to Fig.\ \ref{fig:hyperbola}).
Now draw a parallel line $L$ to straight line $M$ through the origin and note that the strip
bordered by these two parallel lines, $L$ and $M$, has width $O(\Delta^{1/2})$.
Therefore, by Corollary \ref{cor:strip}, there is only a constant
number of elements of $\Lambda$ that can belong to the sector defined by the positve $x$-semiaxis and the
straight line $L$ (shaded region in Fig.\ \ref{fig:hyperbola}).

Suppose now that $\lambda\in\Lambda$ is such that $b(\lambda) \le K \Delta^{1/2}$, for
an appropriately small constant $K$, so that we also have $a(\lambda)\ge \Delta/2$.
It follows that the asymptote to the hyperbola $H_\lambda$, with equation $y=(b(\lambda)/a(\lambda))x$,
has slope
at most $\frac{K}{2} \Delta^{-1/2}$ which implies that $\lambda$, lying below that asymptote, is contained
in the (shaded) sector mentioned above.
\end{proof}

Writing $H(a,\Delta)$ for the hyperbola $x^2/a^2 - y^2/b^2 = 1$, with $a^2+b^2=\Delta^2$, we consider
the finite family of confocal hyperbolas
\beql{hyperbolas}
H_k = H\left(\frac{k}{4}, \Delta\right),\ \ k=0,1,2,\ldots,\Floor{4\Delta}.
\eeq
The hyperbola $H_k$ is the locus of all points $p$ with $\Abs{p+V} - \Abs{p-V} = \frac{k}{2}$.
For each $\lambda$ we define the corresponding $k$ to be the unique integer such that
\beql{def-k}
\Abs{\lambda+V} - \Abs{\lambda-V} = \frac{k}{2} + 2\epsilon,\ \ \ -\frac{1}{8} \le \epsilon < \frac{1}{8}.
\eeq
We write $a = k/4$, $b = \sqrt{\Delta^2 - a^2}$. It follows from \eqref{locus-1} and \eqref{def-k} that
$$
a(\lambda) = a+\epsilon,\ \ \ b(\lambda) = b-\epsilon',
$$
for some $\epsilon'$, of the same sign as $\epsilon$.
From \eqref{quadratic-error} we have that
\beql{error}
\Abs{\epsilon} \le C\Delta\Abs{\lambda}^{-2},
\eeq
for some absolute finite constant $C>0$.

Next we estimate $\epsilon'$:
\begin{align*}
\Abs{\epsilon'} & = \Abs{b-b(\lambda)}\\
 &= \Abs{\sqrt{\Delta^2-a^2}-\sqrt{\Delta^2-a(\lambda)^2}}\\
 &= \Abs{\frac{a(\lambda)^2-a^2}{b+b(\lambda)}}\\
 &= \Abs{\epsilon \frac{a+a(\lambda)}{b+b(\lambda)}}\\
 &\le \Abs{\epsilon} \frac{2\Delta}{b(\lambda)}\\
 &= O(\Abs{\epsilon} \Delta^{1/2}) & \mbox{(from Lemma \ref{lm:b-range}, excepting finitely many $\lambda$s)}\\
 &= O(\Delta^{3/2}\Abs{\lambda}^{-2}) & \mbox{(from \eqref{error})}.
\end{align*}

The asymptote $L(a, \Delta)$ to $H(a, \Delta) = H_k$ is the line $y = (b/a)x$
and a unit normal vector to this line is $u=(b/\Delta, -a/\Delta)$.
We can bound the distance of
$$
\lambda = (x,y) = (a(\lambda)\cosh t, b(\lambda) \sinh t)
$$
to $L(a, \Delta)$
as follows:
\begin{align*}
\Abs{u\cdot\lambda} &= \Abs{ \frac{bx}{\Delta} - \frac{ay}{\Delta} }\\
 &= \frac{1}{\Delta} \Abs{b a(\lambda) \cosh t - a b(\lambda) \sinh t}\\
 &= \frac{1}{\Delta} \Abs{(b(\lambda)+\epsilon') a(\lambda) \cosh t - (a(\lambda)-\epsilon) b(\lambda) \sinh t}\\
 &= \frac{1}{\Delta} \Abs{a(\lambda)b(\lambda) e^{-t} - \epsilon b(\lambda) \sinh t + \epsilon' a(\lambda) \cosh t}\\
 &= O(\Delta^2 \Abs{\lambda}^{-1}) + \Abs{\epsilon} O\left(\Delta^{-1} b(\lambda) \sinh t + \Delta^{-1/2} a(\lambda) \cosh t\right)\\
	& \ \ \ \ \ \ \mbox{(since $\epsilon'=O(\Delta^{1/2}\epsilon)$ and $a(\lambda)e^t \sim \Abs{\lambda}$ or $b(\lambda)e^t \sim \Abs{\lambda}$ from \eqref{sizes})}\\
 &= O(\Delta^2 \Abs{\lambda}^{-1}) + \Abs{\epsilon} O\left( \Delta^{-1} b(\lambda)e^t + \Delta^{-1/2} a(\lambda)e^t\right)\\
 &= O(\Delta^2 \Abs{\lambda}^{-1}) + O\left( \Abs{\epsilon} \Abs{\lambda}\Delta^{-1/2}\right)
        & \mbox{(from \eqref{sizes})}\\
 &= O\left(\Delta^2 \Abs{\lambda}^{-1}\right)
	& \mbox{(from \eqref{error})}.
\end{align*}
Therefore in the region $\Abs{\lambda} \ge C \Delta^{3/2}$ each point of $\Lambda$ is at distance
$O(\Delta^{1/2})$ from one of the asymptotes to the hyperbolas $H_k$. In each strip of width $O(\Delta^{1/2})$ around
each such asymptote we therefore have at most $C$ points, a constant. This gives a total of $O(\Delta)$ points
of $\Lambda$ in that region as there are that many hyperbolas $H_k$.
In the region $\Abs{\lambda} \le C \Delta^{3/2}$ we also have $O(\Delta)$ points because of Corollary
\ref{cor:smallest-gap}.
This concludes the proof of the first part of Theorem \ref{th:main}.

To prove that $\Abs{\Lambda \cap [-R,R]^2} = O(R^{2/3})$ notice that by Corollary \ref{cor:smallest-gap}
and by the first part of Theorem \ref{th:main} we have
$$
\Abs{\Lambda \cap [-R,R]^2} = O\left( \min\Set{\frac{R}{t^{1/2}}, t} \right) = O\left(R^{2/3}\right).
$$

\bibliographystyle{abbrv}
\bibliography{spectral-sets}

\end{document}